\documentclass[]{interact}
\usepackage{tablefootnote}
\usepackage[linktoc=page]{hyperref}
\hypersetup{linkcolor  = blue}
\hypersetup{citecolor=blue} 
\hypersetup{colorlinks=true}
\hypersetup{urlcolor=cyan}
\usepackage{color, colortbl}
\usepackage{tabularx}
\usepackage{graphicx, amsmath, amsthm, amssymb, mathrsfs, amscd}
\usepackage{caption}
\usepackage{enumerate}
\usepackage{algorithm}
\usepackage[noend]{algcompatible}
\usepackage{url}
\usepackage{tikz}
\usepackage[toc,page]{appendix}
\usepackage[norule,symbol,perpage]{footmisc}
\usepackage{footnote}
\usepackage{color}
\usepackage{color, colortbl}
\usepackage{tablefootnote}
\usepackage{footnote}
\usepackage{epstopdf}
\usepackage[caption=false]{subfig}
\usepackage[numbers,sort&compress]{natbib}
\bibpunct[, ]{[}{]}{,}{n}{,}{,}
\makeatletter
\def\NAT@def@citea{\def\@citea{\NAT@separator}}
\makeatother
\theoremstyle{plain}
\newtheorem{theorem}{Theorem}[section]

\theoremstyle{definition}
\newtheorem{definition}[theorem]{Definition}
\newtheorem{example}[theorem]{Example}
\theoremstyle{remark}
\newtheorem{remark}{Remark}

\usepackage{multirow}
\usepackage{caption}
\usepackage{enumerate}
\usepackage{algorithm}
\usepackage[noend]{algcompatible}
\usepackage{url}
\usepackage{tikz}
\usepackage[toc,page]{appendix}
\usepackage{footnote}

\usepackage{color}
\usepackage{color, colortbl}
\usepackage{tablefootnote}
\usepackage{footnote}
\theoremstyle{definition}

\theoremstyle{definition}

\theoremstyle{definition}

\theoremstyle{definition}
\theoremstyle{definition}
\setlength{\oddsidemargin}{0.4in}

\makeatletter
\def\cleardoublepage{\clearpage\if@twoside \ifodd\c@page\else
  \hbox{}
  \vspace*{\fill}
    \vspace{\fill}
  \if@twocolumn\hbox{}\newpage\fi\fi\fi}
\makeatother

\begin{document}
\title{VALENCY OF CERTAIN COMPLEX-VALUED FUNCTIONS}
\author{
\bigskip \name{OLUMA ARARSO ALEMU}
 \affil{Department of Mathematics, Addis Ababa University, Addis Ababa, Ethiopia. \\  Email: oluma.ararso@aau.edu.et}}
\maketitle
\begin{abstract}
The valence of a function $f$ at a point $z_0$ is the number of distinct,finite solutions to $f(z) = z_0.$ In this paper we bound the valence of complex-valued harmonic polynomials in the plane for some special harmonic polynomials of the form  $f(z) = p(z)\overline{q(z)}$, where $p(z)$ is an analytic polynomial of degree $n$ and $q(z)$ is an analytic polynomial of degree $m,$ and  $q(z) \neq \alpha p(z)$ for some constant $\alpha.$ Using techinques of complex dynamics used in the work Sheil-Small and Wilmshurst on the valence of harmonic polynomial, we prove that the harmonic polynomial $f(z) = p(z)\overline{q(z)}$ has the valency of $m+n.$
\end{abstract}
\begin{keywords}
Analytic polynomials, anti-analytic polynomials, harmonic polynomials, non-linear elliptic differential equation, logharmonic,   Wilmshurst's conjecture.
\end{keywords} 

\section{\textcolor{blue}{INTRODUCTION}}$\label{I}$
 An upper bound on the total number of roots of analytic function can be known due to the result of the fundamental theorem of algebra, which stipulates that every non-zero, single-variable, analytic polynomial of degree $n$ with complex coefficients has  \bigskip exactly $n$ complex roots, counted with multiplicity. \\
  The argument principle for harmonic functions can be formulated as a direct generalization of the classical result for analytic functions, see Duren  \textit{et al} \cite{duren1996argument}.
 The winding number of the image curve $f(C)$ about the origin, $\frac{1}{2\pi}\Delta _C~ arg f(z)$  equals the total number of zeros of $f$ in $\mathbb{D},$ counted according to multiplicity where $\mathbb{D}$ is a plane domain bounded by a rectifiable Jordan  curve C, oriented in the counterclockwise direction \bigskip and $f(z) \neq 0$  on   C is  analytic in $\mathbb{D}$ and  continuous in $\overline{\mathbb{D}}.$ \\
 The location and number of the zeros of analytic polynomials as well as complex-valued harmonic polynomials has been studied by many researchers. 
After a basic paper by Clunie and Sheil Small in 1984 \cite{clunie1984harmonic}, the theory of Harmonic univalent and multivalent functions attracted attention of complex analysts. Since then the theory has rapidly developed and became an active branch of complex analysis for researchers. One area of investigation that has recently become of interest is the number and location of zeros of complex-valued harmonic polynomials. Clunie and Sheil-Small \cite{clunie1984harmonic} introduced the family of complex-valued harmonic functions $f = u + iv$  defined in the unit disk \bigskip $\mathbb{D} = \{z : |z| < 1\},$ where $u$ and $v$ are real harmonic in $\mathbb{D}.$\\
 Lewy's Theorem stipulates that if $f$ is a complex valued harmonic function that is locally univalent in a domain $\mathbb{D} \subset \mathbb{C},$ then its Jacobian $,J_f(z),$ never vanish for all $z \in \mathbb{D}.$ For more information we can refer to \cite{lewy1936non}.  As an immediate consequence of Lewy's Theorem, a complex valued harmonic function $f(z)=h(z)+\overline{g(z)}$ is locally univalent and sense-preserving if and only if $h'(z) \neq 0$ and $|\omega (z)|<1,$ where $\omega (z)$ is a  \bigskip dilatation  function of $f$ defined by $\omega (z) = \frac{g'(z)}{h'(z)}.$ \\ 
Determining the number of zeros of complex-valued harmonic polynomial is an interesting research area in complex analysis. Suppose $f$ is a polynomial function of degree $n$ in two variables and let $\mathcal{Z}_f$ denote the number of zeros of $f,$ that is, number of points $z \in \mathbb{C}$ satisfying $f(z) = 0.$ Then for $n > m,$ we have $n \leq \mathcal{Z}_f \leq n^2.$ The lower bound is based on the generalized argument principle and is sharp for each $m$ and $n.$ The upper bound follows from applying Bezout’s theorem to the real and imaginary parts of $f(z) = 0$ after noticing that the zeros are isolated, which was shown by Wilmshurst \cite{wilmshurst1998valence} and this upper bound is sharp in general. For instance as illustrated by Wilmshurst in the same paper,  $\left(\frac{1}{i}\right)^n Q\left(iz+\frac{1}{2} \right) $ is a polynomial with $n^2$  zeros where $Q(z)=z^n+(z-1)^n+i\overline{z}^n -i(\overline{z}-1)^n.$ But it is natural to ask whether or not it can  \bigskip be improved for some interesting special classes of polynomials.\\
Wilmshurst \cite{hauenstein2015experiments} considered the cases of complex-valued harmonic polynomials of the form  
$f(z)=p(z)+\overline{q(z)}.$ If $\mathrm{deg}p=n>m=\mathrm{deg}q,$ then as to the question of improving the bound $\mathcal{Z}_f \leq n^2$ given additional information, Wilmshurst made the conjecture $\mathcal{Z}_f \leq 3n-2+m(m-1).$ This conjecture is stated in \cite{wilmshurst1998valence}. It is also among the list of open problems in\cite{bshouty2010problems}. For $m=n-1$ the upper bound follows from Wilmshurst's theorem  and examples were also given in \cite{wilmshurst1998valence} showing that this bound is sharp. For $m = 1,$ the upper bound was proved by D.Khavinson and G.Swiatek \cite{khavisonmaximal}, and bound was also sharp.  For $m = n- 3$, the conjectured bound is $3n - 2+ m(m- 1) = n^ 2 - 4n + 10.$ \\
Using techniques of complex dynamics, it is  still possible to improve for some special harmonic polynomials of the form $f(z) = p(z)\overline{q(z)}$, where $p(z)$ is an analytic polynomial of degree $n$ and $q(z)$ is an analytic polynomial of degree $m,$ and  $q(z) \neq \alpha p(z)$ for some constant $\alpha.$ The valency of this type of function is an open problem in \cite{bshouty2010problems} and we solve it here.
\begin{definition} \cite{abdulhadi2012univalent}
Let $\mathbb{D}$ be an open unit disk in the complex plane $\mathbb{C}.$ Denote by $\mathcal{H}(\mathbb{D})$ the linear space of all analytic  functions in  $\mathbb{D}$, and let  $\mathcal{B}(\mathbb{D})$ be the set of all functions $\omega(z) \in \mathcal{H}(\mathbb{D})$ satisfying $|\omega(z)| < 1, z \in \mathbb{D}.$ A non-constant function $f$ is said to be \textbf{logharmonic} in  $\mathbb{D}$ if $f$ is the solution of the nonlinear elliptic differential equation
$$ 
\overline{f_{\overline{z}}(z)} = \omega(z)\left( \frac{\overline{f(z)}}{f(z)} \right) f_z(z)
$$ where $\omega \in \mathcal{B}(\mathbb{D})$ and the function $\omega(z)$ is called the second dilatation of $f.$
\end{definition}
\begin{remark} For $f(z)$ as in the above definition, 
\begin{itemize}
\item[i.] The Jacobian $,J_f$ of $f$ is given by
\begin{equation}
J_f = |f_z|^2-|f_{\overline{z}}|^2 = |f_z|(1 - |\omega |^2),
\end{equation}
which is positive, and therefore, every non-constant log-harmonic mapping is sense-preserving
and open in $\mathbb{D}.$
\item[ii.]  If $f$ is a non-vanishing log-harmonic mapping in $\mathbb{D},$ then $f$ can be expressed as
$$
f(z) = h(z)\overline{g(z)},
$$
where $h(z)$ and $g(z)$ are non-vanishing analytic functions in $\mathbb{D}.$ 
\item[iii.] if $f$ vanishes at $z = 0$ but is not identically zero, then such a mapping $f$ admits the following
representation
$$
f(z) = z|z|^{2\beta}h(z)\overline{g(z)},
$$
where $Re \beta  > \frac{-1}{2}, h, g \in \mathcal{H}(\mathbb{D}), h(0) \neq 0$ and $g(0) = 1$, see \cite{abdulhadi1988univalent}.
\end{itemize}
\end{remark}
\begin{remark}[The previous Conjecture]\cite{hauenstein2015experiments}
 Let $f(z) = p(z) + \overline{q(z)},$ where $\mathrm{deg}q(z) = m$ and $\mathrm{deg}p(z) = n,$ satisfy
 $f(z) \rightarrow \infty $ as $z \rightarrow \infty.$ By conjugating $f(z),$ if necessary, we may assume that $n > m.$ For $m = n$ and $m = n-1$ it has been shown that the valence bound $n^2$ to be sharp, but for $m < n- 1$ it would be surprising if it were still possible to obtain $n^2$ valence.The correct bound is $m(m - 1) + 3n - 2$ for $1 < m < n - 1.$ For $m = 1$ this becomes $\overline{z} + h(z)$ is at most $3n - 2$ valent, or equivalently, that the number of fixed points of the conjugate of an analytic polynomial of degree $n$ is at most $3n - 2.$
 \end{remark}
 
\begin{theorem} \cite{de2012fundamental} If $p(z)$ is a non constant complex polynomial, then $p$ has a zero in the complex field $\mathbb{C}.$
\end{theorem}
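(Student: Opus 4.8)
The statement is the Fundamental Theorem of Algebra, so the plan is to give one of the standard complex-analytic arguments; I would take the route through Liouville's theorem, which is the shortest and fits the analytic-function machinery already invoked in the introduction. First I would argue by contradiction: assume $p(z) \neq 0$ for every $z \in \mathbb{C}$, and set $g(z) = 1/p(z)$, which is then an entire function since it is a quotient of entire functions with non-vanishing denominator.

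The next step is a growth estimate at infinity. Writing $p(z) = a_n z^n + \dots + a_0$ with $a_n \neq 0$ and $n \geq 1$, I would show $|p(z)| \geq \tfrac{1}{2}|a_n|\,|z|^n$ for all sufficiently large $|z|$, by bounding the lower-order part $|a_{n-1}z^{n-1} + \dots + a_0| \leq \tfrac{1}{2}|a_n|\,|z|^n$ once $|z|$ exceeds an explicit radius $R$ depending only on the coefficients. Consequently $|p(z)| \to \infty$ as $|z| \to \infty$, so $g(z) \to 0$; in particular $g$ is bounded on $\{\,|z| \geq R\,\}$, and being continuous it is also bounded on the compact disk $\{\,|z| \leq R\,\}$, hence bounded on all of $\mathbb{C}$.

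By Liouville's theorem a bounded entire function is constant, so $g$ is constant, whence $p$ is constant, contradicting $n \geq 1$. Therefore $p$ must have a zero in $\mathbb{C}$, which is the claim.

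I expect the only point requiring genuine care is making the growth estimate quantitative (choosing $R$ explicitly in terms of $|a_0|,\dots,|a_n|$), together with the fact that the argument presupposes Liouville's theorem. If one wished to avoid Liouville, the alternative is the minimum-modulus argument: since $|p(z)| \to \infty$, the continuous function $|p|$ attains a global minimum at some $z_0 \in \mathbb{C}$, and expanding $p$ in a Taylor polynomial about $z_0$ one shows that if $p(z_0) \neq 0$ there is a direction along which $|p|$ strictly decreases, a contradiction; alternatively the argument-principle / winding-number computation alluded to in the introduction would also close the proof. Any of these suffices, and I would choose the Liouville version for brevity.
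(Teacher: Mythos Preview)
Your argument is correct: the Liouville-based contradiction is a standard and complete proof of the Fundamental Theorem of Algebra, and the growth estimate and the minimum-modulus alternative you sketch are both valid ways to close it.

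However, there is nothing to compare against in the paper itself. The theorem is stated with a citation to \cite{de2012fundamental} and is not proved in the paper; it is invoked only as a known result, immediately followed by its standard corollary on the factorisation of polynomials. So the ``paper's own proof'' does not exist here. If you want to match the cited source rather than the paper, note that de~Oliveira's article gives a deliberately elementary real-variable argument (``from the four basic operations'') that avoids Liouville's theorem and the complex-analytic machinery entirely; your proof is shorter but presupposes more background, while the cited one trades length for minimal prerequisites. Either is acceptable for this statement, since the paper treats it as a black box.
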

As an immediate consequence of the Fundamental Theorem of Algebra, if $p(z)$ is a polynomial and $a_1,a_2,...,a_m$ are its zeros with $a_j$ having multiplicity $k_j,$ then 
$$
p(z)= c(z-a_1)^{k_1}(z-a_2)^{k_2}...(z-a_m)^{k_m}
$$
 for some constant $c$ and $k_1+k_2+...+k_m$ is the degree of $p.$ 
\begin{theorem} \cite{kirwan1992complex} $\label{II0}$
 Let $f$ and $g$ be relatively prime polynomials in the real variables $x$ and $y$ with real coefficients, and let $\mathrm{deg} h = n$ and $\mathrm{deg}g = m.$ Then the two algebraic curves $f(x, y) = 0$ and $g(x, y) = 0$ have at most $mn$ points in common.
  \end{theorem}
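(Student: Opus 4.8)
The plan is to prove this through the theory of resultants, which turns a two-variable intersection count into a one-variable root count. I would first regard $f$ and $g$ as polynomials in $y$ with coefficients in $\mathbb{R}[x]$. Since $f$ and $g$ are relatively prime in $\mathbb{R}[x,y]$, Gauss's lemma shows that they have no common factor of positive $y$-degree over the field $\mathbb{R}(x)$, so their resultant $R(x) := \mathrm{Res}_y(f,g)$ — the determinant of the Sylvester matrix — is a nonzero element of $\mathbb{R}[x]$.

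Two properties of $R$ drive the argument. First, $\deg R \le mn$: the Sylvester matrix consists of $m$ rows whose entries are coefficients of $f$ (each of $x$-degree at most $n$) and $n$ rows whose entries are coefficients of $g$ (each of $x$-degree at most $m$), so every monomial appearing in the determinant expansion has $x$-degree at most $mn$. Second, if $(x_0,y_0)$ is a common point of the two curves, then $f(x_0,y)$ and $g(x_0,y)$ share the root $y_0$, forcing $R(x_0)=0$. Hence the common points of $f=0$ and $g=0$ project into the set of at most $mn$ roots of $R$.

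The one genuinely delicate step is to upgrade ``at most $mn$ admissible $x$-coordinates'' to ``at most $mn$ common points'', since a vertical line could in principle contain several intersection points. I would handle this by a preliminary generic real-linear change of coordinates: the common zero set is finite (it lies in the zero locus of $R$ after projection, together with the symmetric statement for the other variable), so only finitely many directions are parallel to a line joining two distinct common points, and only finitely many directions make the leading $y$-coefficient of $f$ or of $g$ vanish; choosing the new $x$-axis to avoid all of these makes the projection to the $x$-axis injective on the common zero set while keeping $\deg R \le mn$, and the conclusion follows. Alternatively one can simply invoke the projective B\'ezout theorem over $\mathbb{C}$: two plane curves of degrees $n$ and $m$ with no common component meet in at most $mn$ points of $\mathbb{P}^2(\mathbb{C})$, and the real affine solutions of $f=g=0$ form a subset of these.

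I expect the main obstacle to be the bookkeeping in that last reduction — confirming that a single generic coordinate change simultaneously makes the vertical projection injective, preserves relative primality, and does not spoil the degree bound — rather than anything in the resultant estimate itself, which is routine linear algebra.
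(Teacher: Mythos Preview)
Your resultant-based argument is the standard classical route to B\'ezout's bound and is essentially correct, including the generic linear change of coordinates to make the projection to the $x$-axis injective on the (finite) common zero set. One minor imprecision: the Sylvester matrix for $\mathrm{Res}_y(f,g)$ has $\deg_y g$ rows built from the coefficients of $f$ and $\deg_y f$ rows built from the coefficients of $g$, not literally $m$ and $n$ rows; the degree estimate $\deg R \le mn$ still goes through once you track that the coefficient $a_i(x)$ of $y^i$ in $f$ has $x$-degree at most $n-i$ (and similarly for $g$), but your one-line justification as written does not quite capture this.

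As for comparison with the paper: there is nothing to compare. The paper does not give a proof of this theorem at all; it merely quotes the statement from Kirwan's textbook \cite{kirwan1992complex} and remarks that B\'ezout's theorem is useful for bounding zeros of harmonic polynomials. So your proposal is not an alternative to the paper's argument but rather a proof where the paper offers none.
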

 A Bezout's theorem  is very useful to bound the number of zeros of complex-valued harmonic polynomials.

  \begin{theorem} \cite{khavinson2018zeros} $\label{II00}$
 For a harmonic polynomial $f(z) = p(z) + \overline{q(z)}$ with real coefficients, the equation $f(z) = 0$ has at most $ n^2 - n$ solutions that satisfy $(Re z)(Imz) \neq 0$ where $\mathrm{deg}p =n>m=\mathrm{deg}q.$
  \end{theorem}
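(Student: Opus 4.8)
The plan is to pass from the complex equation $f(z)=0$ to a real polynomial system in the variables $x,y$ (with $z=x+iy$), to use the reality of the coefficients of $p$ and $q$ to peel a linear factor off the imaginary part of $f$, and then to finish with a single application of Bezout's theorem.

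First I would write $f(z)=U(x,y)+iV(x,y)$, where $U=\operatorname{Re}f$ and $V=\operatorname{Im}f$ are real polynomials in $x,y$; since $f=p+\overline{q}$ is harmonic, both $U$ and $V$ are harmonic, and since $\deg p=n>m=\deg q$ the degree-$n$ part of $f$ is exactly $a_nz^n$, so $\deg U=\deg V=n$. Because $p$ and $q$ have real coefficients, a direct computation gives the identity $\overline{f(\bar z)}=f(z)$, which is equivalent to $U(x,-y)=U(x,y)$ and $V(x,-y)=-V(x,y)$. Hence $V$ is odd in $y$, so $V(x,y)=y\,W(x,y)$ for a polynomial $W$, and reading off the top term $a_n\operatorname{Im}(z^n)$ shows $\deg W=n-1$.

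Next, if $z=x+iy$ is a solution of $f(z)=0$ with $\operatorname{Im}z\neq0$ — and in particular any solution with $(\operatorname{Re}z)(\operatorname{Im}z)\neq0$ is such — then $U(x,y)=0$ and, since $y\neq0$, also $W(x,y)=0$. So it suffices to bound the number of common real zeros of $U$ and $W$. Provided $U$ and $W$ are relatively prime, Theorem~\ref{II0} gives at most $\deg U\cdot\deg W=n(n-1)=n^2-n$ of them, and distinct solutions $z$ give distinct pairs $(x,y)$; this is the desired bound. (Note that only the weaker restriction $\operatorname{Im}z\neq0$, i.e. symmetry across the real axis, is actually used.)

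The step I expect to be the main obstacle is checking that $U$ and $W$ are relatively prime. If they shared a non-constant common factor $\phi$, then $f=U+iyW$ would vanish on the real locus $\{\phi=0\}$. Since $n\ge1>\deg q$ forces $f(z)\to\infty$ as $z\to\infty$, that locus can contain no unbounded branch, while a bounded branch would bound a region on whose boundary the harmonic functions $U$ and $V$ vanish, so by the maximum principle $U\equiv V\equiv0$ there and hence $f\equiv0$ identically — impossible for $n\ge1$. The only leftover case, $\{\phi=0\}$ being a finite set of isolated real points, is dispatched by dividing out $\phi$, applying Theorem~\ref{II0} to the coprime quotients, and adding a crude bound on the number of those isolated points (or simply by invoking the argument of \cite{khavinson2018zeros}). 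With relative primeness secured, the Bezout count completes the proof.
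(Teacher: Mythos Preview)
The paper does not supply its own proof of this statement; it is quoted from \cite{khavinson2018zeros}. Your overall strategy --- exploiting the symmetry $\overline{f(\bar z)}=f(z)$ coming from the real coefficients to write $V=yW$ with $\deg W=n-1$, and then applying Bezout's theorem (Theorem~\ref{II0}) to the pair $U,W$ --- is the natural argument and is essentially the one used in the source.

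The genuine gap is in establishing that $U$ and $W$ are relatively prime. Your maximum-principle argument rules out a common factor whose real zero locus contains a curve, but it does not exclude a non-constant common factor $\phi$ whose real locus is finite or empty (such as $\phi=x^2+y^2+1$), and your ``dispatch'' of that case is not actually carried out: dividing out $\phi$ and adding back a count of its isolated real zeros does not obviously stay below $n(n-1)$. A clean, purely algebraic fix is available. Pass to the complex variables $z,w$ (with $w$ playing the role of $\bar z$) and set $F(z,w)=p(z)+q(w)$ and $G(z,w)=p(w)+q(z)$; then $U=\tfrac12(F+G)$ and $V=\tfrac{1}{2i}(F-G)$, so any common factor of $U$ and $W$ (hence of $U$ and $V$) must divide both $F$ and $G$. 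But the top-degree homogeneous part of $F$ is $a_n z^n$ while that of $G$ is $a_n w^n$ (this is exactly where $n>m$ enters), and no non-constant polynomial has its leading form dividing both $z^n$ and $w^n$. Hence $F$ and $G$ are coprime, so $U$ and $V$ are coprime, and since $y\nmid U$ (indeed $U(x,0)=p(x)+q(x)$ has degree $n$) also $U$ and $W$ are coprime. With this secured your Bezout step is fully justified and the bound $n(n-1)=n^2-n$ follows. (Minor slip: ``$n\ge 1>\deg q$'' should presumably read ``$n>m=\deg q$''.)
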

If we consider $p(z) = z^n + (z - 1)^n$ and $q(z) = z^n - (z - 1)^n,$ then $f(z) = p(z) + \overline{q(z)}$ has $n^2$ number of roots including the root at $0$ with the multiplicity $n.$ In fact, this is the polynomial that Wilmshurst used (with a slight perturbation to split the multiple root at the origin) to show that the maximal bound $n^2$ is sharp. This is the reason why in Theorem  $\ref{II00}$ we only consider roots of the coordinate axes and it yields that the harmonic polynomial with real coefficients and with the maximal $n^2$ number of roots should have at least $n$ roots on the coordinate axes.
  \begin{theorem}\cite{wilmshurst1998valence} $\label{II000}$
  Let $f$ be a function harmonic in the (entire) complex plane. If $liminf_{z \rightarrow \infty}|f| > 0,$ then $f$ has finitely many zeros.
  \end{theorem}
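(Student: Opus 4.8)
The plan is to combine the hypothesis (which pins the zeros into a bounded region) with the maximum principle (which forbids a positive-dimensional set of zeros). Since $f$ is harmonic on all of $\mathbb{C}$, I would write $f=h+\overline{g}$ with $h,g$ entire. If $f$ is constant the assertion is trivial: letting $z\to\infty$ shows that constant is nonzero, so $f$ has no zeros. Hence assume $f$ is nonconstant and let $Z(f)=\{z\in\mathbb{C}:f(z)=0\}$. The hypothesis $\liminf_{z\to\infty}|f(z)|>0$ gives $\varepsilon>0$ and $R>0$ with $|f(z)|\ge\varepsilon$ whenever $|z|\ge R$, so $Z(f)\subseteq\overline{D(0,R)}$. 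As $Z(f)$ is closed (by continuity of $f$) and bounded, it is compact; if it were infinite it would have an accumulation point $z_0$ with $|z_0|\le R$, and the goal becomes to rule this out.

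The key step is to show that an accumulation point of $Z(f)$ forces $f$ to vanish identically on an analytic arc. Writing $u=\mathrm{Re}\,f$ and $v=\mathrm{Im}\,f$ (both real harmonic, and, since $f$ is nonconstant, at least one nonconstant), we have $Z(f)=Z(u)\cap Z(v)$. Recalling the local normal form of a real harmonic function near a point (a harmonic homogeneous polynomial $\mathrm{Re}(c(z-a)^k)$ plus higher order terms), the zero set of a nonconstant real harmonic function is a locally finite union of real-analytic arcs that branch only at critical points and in particular has no free endpoints. Two distinct real-analytic arcs meet in a discrete set (analytic continuation: if they agree on a subarc they coincide locally), so $Z(u)\cap Z(v)$ can accumulate only if $Z(u)$ and $Z(v)$ share a common subarc; thus $z_0$ lies on an arc $\gamma\subseteq Z(f)$ along which $f\equiv 0$. (If instead $u$ or $v$ is constant: a nonzero constant gives $Z(f)=\varnothing$, and a zero constant reduces to the real harmonic case, where the same dichotomy applies.)

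To finish, I would let $\Gamma$ be a maximal connected real-analytic subset of $Z(f)$ containing $\gamma$. By the previous step $\Gamma$ has no endpoints, so it is a boundaryless $1$-complex in which every vertex (branch point) has degree $\ge 2$; in a bounded region the zero set of a real-analytic function is moreover a finite such complex. Hence $\Gamma$ is either unbounded — which is impossible since $Z(f)\subseteq\overline{D(0,R)}$ — or it contains a simple closed curve $\Gamma_0$. In the latter case $\Gamma_0$ bounds a domain $\Omega$ with $u\equiv v\equiv 0$ on $\partial\Omega$, so the maximum principle gives $u\equiv v\equiv 0$ on $\Omega$, i.e. $f\equiv 0$ on $\Omega$; since $f$ is real-analytic and $\mathbb{C}$ is connected, $f\equiv 0$ everywhere, contradicting that $f$ is nonconstant (and contradicting the hypothesis). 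Therefore $Z(f)$ is finite.

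I expect the main obstacle to be the middle step: stating cleanly, with minimal machinery, that an accumulation point of $Z(f)$ actually produces a genuine arc of zeros, i.e. controlling the local structure of $Z(u)\cap Z(v)$. The most economical route is to quote the standard description of zero sets of (real) harmonic functions — already implicit in the argument-principle framework of Duren \emph{et al.}~\cite{duren1996argument} — or the semianalytic-set fact that a real-analytic function $\not\equiv 0$ has, locally and in compact sets, a zero set that is a finite union of points and analytic arcs. Steps 1 and 3 are then just the hypothesis and a routine maximum-principle (Liouville-type) argument, respectively.
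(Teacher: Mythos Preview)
The paper does not supply its own proof of this theorem; it is stated with a citation to Wilmshurst~\cite{wilmshurst1998valence} and no argument follows. There is therefore nothing in the present paper to compare your proposal against.

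For what it is worth, your outline is essentially Wilmshurst's original argument: the hypothesis confines $Z(f)$ to a compact disk; the local structure of the zero set of a nonconstant real harmonic function (a finite union of real-analytic arcs meeting only at critical points, with no free endpoints) forces any accumulation of zeros of $f=u+iv$ to occur along a common subarc of $Z(u)$ and $Z(v)$; maximality plus boundedness then produces a simple closed curve in $Z(f)$, and the maximum principle gives $f\equiv 0$, a contradiction. Your own caveat about the middle step is apt---that is exactly where Wilmshurst does the real work---but the argument is sound.
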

\begin{theorem} \cite{wilmshurst1998valence}$\label{II1}$
If $f(z)= p(z)+\overline{q(z)} $ is a harmonic polynomial such that $\mathrm{deg}p=n>m=\mathrm{deg}q$ and $\lim_{z \rightarrow \infty}f(z) = \infty,$ then $f(z)$ has at most $n^2$ zeros.
\end{theorem}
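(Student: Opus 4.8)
The plan is to regard $\mathbb{C}$ as $\mathbb{R}^2$ by writing $z=x+iy$ and to split $f=u+iv$ with $u=\mathrm{Re}\,f$ and $v=\mathrm{Im}\,f$, which are polynomials in the real variables $x,y$ with real coefficients. The finite zeros of $f$ are exactly the common points of the real algebraic curves $u(x,y)=0$ and $v(x,y)=0$, so if $u$ and $v$ turn out to be relatively prime in $\mathbb{R}[x,y]$, then Theorem~\ref{II0} bounds the number of these common points by $(\deg u)(\deg v)$. The whole proof then reduces to computing the two degrees and checking coprimality.

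First I would record the top-degree behaviour. Write $p(z)=a_nz^n+(\text{lower-order terms})$ with $a_n\neq0$. Because $n=\deg p>\deg q=m$, only $p$ contributes terms of degree $n$ to $u$ and $v$, so both $u$ and $v$ have degree exactly $n$, their degree-$n$ homogeneous parts being $u_n=\mathrm{Re}\,(a_nz^n)$ and $v_n=\mathrm{Im}\,(a_nz^n)$, where $z^n=(x+iy)^n$ is understood as a polynomial in $x,y$. Thus $(\deg u)(\deg v)=n^2$, which is already the desired bound, and it remains only to exclude a common factor.

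The heart of the argument is coprimality of $u$ and $v$. In $\mathbb{C}[x,y]$ one has the identities $u_n+iv_n=a_n(x+iy)^n$ and $u_n-iv_n=\overline{a_n}\,(x-iy)^n$; since $x+iy$ and $x-iy$ are non-associate irreducible elements of the unique factorization domain $\mathbb{C}[x,y]$, these two polynomials are coprime, and hence every common divisor of $u_n$ and $v_n$ in $\mathbb{R}[x,y]$ is a nonzero constant. Now let $w\in\mathbb{R}[x,y]$ divide both $u$ and $v$, say $u=wu_1$ and $v=wv_1$. Since $\mathbb{R}[x,y]$ is an integral domain, the leading (top-degree) form of a product is the product of the leading forms; comparing degree-$n$ parts shows that the leading form of $w$ divides both $u_n$ and $v_n$, hence is constant, hence $w$ itself is constant. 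Therefore $u$ and $v$ are relatively prime.

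Finally I would invoke Theorem~\ref{II0} with the relatively prime real polynomials $u$ and $v$, each of degree $n$: the curves $u=0$ and $v=0$ have at most $n\cdot n=n^2$ points in common, so $f$ has at most $n^2$ zeros. I would also remark that, under $n>m$, the growth hypothesis $f(z)\to\infty$ as $z\to\infty$ is automatic (and agrees with Theorem~\ref{II000}), so it is not actually needed for the counting bound. I expect the only genuine obstacle to lie in the coprimality step---specifically, justifying carefully that passing to top-degree forms is valid and that the conjugate factorization into $(x\pm iy)^n$ really forces $u$ and $v$ to share no nonconstant factor over $\mathbb{R}[x,y]$; everything else is routine bookkeeping together with a direct appeal to Bezout's theorem.
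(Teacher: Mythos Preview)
Your proposal is correct and follows exactly the route the paper indicates: the paper does not spell out a proof of Theorem~\ref{II1} but simply remarks that ``the proof of this result can also readily follow from Bezout's theorem,'' and your argument is the standard way to make that remark precise---splitting $f=u+iv$ into real polynomials of degree $n$, verifying their coprimality via the factorization $u_n\pm iv_n=a_n^{\pm}(x\pm iy)^n$ of the top-degree forms, and then invoking Theorem~\ref{II0}. There is nothing to add; your coprimality step is the one nontrivial point and you have handled it correctly.
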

 The proof of this result can also readily follows from Bezout's theorem.\bigskip \\
 The following result is stated and peoved in \cite[Theorem~1]{khavinson2003number}.
\begin{theorem}$\label{II2}$
Let $q(z)= z-\overline{p(z)},$ where $p(z)$ is an analytic polynomial with condition $\mathrm{deg}p=n>1.$  Then $\mathcal{Z}_q \leq 3n-2.$
\end{theorem}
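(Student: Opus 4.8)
The plan is to read the zeros of $q(z)=z-\overline{p(z)}$ as the fixed points of the anti-analytic map $g(z):=\overline{p(z)}$ and to count them by the sign of the Jacobian. Since $q_z\equiv 1$ and $q_{\overline z}=-\overline{p'(z)}$,
\[
J_q(z)=|q_z|^2-|q_{\overline z}|^2=1-|p'(z)|^2,
\]
so a zero $z_0$ of $q$ is sense-preserving when $|p'(z_0)|<1$, sense-reversing when $|p'(z_0)|>1$, and \emph{singular} when $|p'(z_0)|=1$. Because $n=\deg p>1$, on a large circle $|z|=R$ the term $\overline{a_nz^n}$ dominates $q$, so the image curve winds $-n$ times about the origin; Theorem~\ref{II000} guarantees that $q$ has only finitely many zeros, and the argument principle for harmonic functions \cite{duren1996argument} identifies this winding number with the sum of the local indices of the zeros of $q$. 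A zero at which $J_q\neq 0$ is, by the inverse function theorem, a nondegenerate zero of the $C^1$ map $q$, hence isolated with local index $+1$ (sense-preserving) or $-1$ (sense-reversing). Writing $N_\pm$ for their numbers, in the generic case where there are no singular zeros this gives $N_+-N_-=-n$, hence $\mathcal Z_q=N_++N_-=2N_++n$, and everything reduces to proving $N_+\le n-1$.

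To obtain $N_+\le n-1$ I would use complex dynamics, in the spirit of the arguments of Sheil-Small and Wilmshurst. Put $\widetilde p(z):=\overline{p(\overline z)}$, so that $g(z)=\widetilde p(\overline z)$ and the second iterate
\[
G:=g\circ g=\widetilde p\circ p
\]
is a genuine analytic polynomial of degree $n^2$, with $G'(z)=\widetilde p'(p(z))\,p'(z)$. If $z_0$ is a sense-preserving zero of $q$, then $g(z_0)=z_0$, so $p(z_0)=\overline{z_0}$ and $G'(z_0)=\overline{p'(z_0)}\,p'(z_0)=|p'(z_0)|^2<1$; thus $z_0$ is an attracting fixed point of the polynomial $G$, and its immediate basin $B$ is invariant under $g$ because $g(z_0)=z_0$. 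By Fatou's theorem $B$ contains a critical point $c$ of $G$. But $G'$ vanishes precisely at the zeros of $p'$ and at the $g$-preimages of zeros of $p'$, so either $c$ or $g(c)$ is a zero of $p'$ lying in $B$. Distinct attracting fixed points of $g$ have disjoint immediate basins, and $p'$ has at most $n-1$ zeros, so $N_+\le n-1$, and therefore $\mathcal Z_q\le 2(n-1)+n=3n-2$ whenever there are no singular zeros.

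The delicate point — and the one I expect to be the main obstacle — is the singular zeros, where $J_q=0$ and the local index of $q$ need no longer equal $\pm1$. Such a zero is a fixed point of $G$ with multiplier $|p'(z_0)|^2=1$, i.e.\ a parabolic fixed point, whose attracting petals again have basins that draw on the pool of $n-1$ critical points of $g$; one must combine this with a local-index estimate at $z_0$ — or, alternatively, with a perturbation of $p$ followed by a limiting argument that checks no zeros are lost — to see that the contribution of the singular zeros is absorbed and the bound $\mathcal Z_q\le 3n-2$ persists in all cases. Making this accounting precise, i.e.\ controlling the degenerate zeros, is the technical heart of the proof; once it is in place, it combines with the argument-principle count and the Fatou estimate above to give $\mathcal Z_q\le 3n-2$.
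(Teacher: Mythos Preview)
The paper does not supply its own proof of this statement: it simply records the theorem and attributes it to Khavinson and \'Swi\k{a}tek \cite{khavinson2003number}. Your proposal is precisely an outline of that Khavinson--\'Swi\k{a}tek argument --- the harmonic argument principle to get $N_+-N_-=-n$, the passage to the analytic second iterate $G=\widetilde p\circ p$, and the Fatou critical-point count to force $N_+\le n-1$ --- so there is nothing to compare: you have reconstructed exactly the proof that the paper is citing.

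Your write-up of the non-singular case is correct; the one step you pass over quickly, that the immediate basin $B$ of $z_0$ under $G$ is $g$-invariant, follows because $g$ commutes with $G$ (both $g\circ G$ and $G\circ g$ equal $g\circ g\circ g$), so the homeomorphism $g$ permutes the Fatou components of $G$ and must fix the one containing the $g$-fixed point $z_0$. Where your proposal remains a sketch is exactly where you say it does: the singular zeros with $|p'(z_0)|=1$. In \cite{khavinson2003number} this is handled by a perturbation argument --- replacing $p$ by $p+c$ for generic small $c$ so that no zero of $q$ is singular, invoking the generic bound $3n-2$, and then using that the zero set of $z-\overline{p(z)+c}$ varies upper-semicontinuously in $c$ (finitely many isolated zeros cannot increase in a limit). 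Once you make that limiting step precise, your argument is complete and coincides with the cited one.
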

 \section{\textcolor{blue}{RESULT}} $\label{III}$ 
 Our main result is the following.
\begin{theorem}$\label{III1}$
Let $f(z) = p(z)\overline{q(z)}$, where $p(z)$ is an analytic polynomial of degree $n$ and $q(z)$ is an analytic polynomial of degree $m,$ and let $q(z) \neq \alpha p(z)$ for some constant $\alpha.$ Then $\mathcal{Z}_f \leq m+n.$ 
\end{theorem}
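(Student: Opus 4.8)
The plan is to count the fibre $f^{-1}(w)$ by separating its points according to the sign of the Jacobian $J_f=|f_z|^2-|f_{\overline z}|^2=|p'|^2|q|^2-|p|^2|q'|^2$, using the argument principle to fix the difference of the two orientation classes and a complex-dynamical estimate to bound the smaller class. Since the valency is the supremum over $w$ of $\#f^{-1}(w)$ and, by Sard's theorem, almost every $w$ is a regular value, it suffices to treat regular values $w\neq 0$, for which every solution satisfies $J_f\neq 0$. Finiteness of $f^{-1}(w)$ is exactly where the hypothesis $q\neq\alpha p$ is needed: were some fibre infinite, the two real curves $\operatorname{Re}(p\overline q)=\operatorname{Re}w$ and $\operatorname{Im}(p\overline q)=\operatorname{Im}w$ would share a component, i.e. $f$ would be constant along an arc; for a product $p\overline q$ this degeneracy occurs only when $q=\alpha p$, in which case $f=\overline\alpha|p|^2$ maps onto a ray and the fibres are the lemniscates $\{|p|=\mathrm{const}\}$. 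Ruling this out by hypothesis, Bézout's theorem (Theorem~\ref{II0}) makes every fibre finite, and $|f(z)|=|p(z)||q(z)|\to\infty$ guarantees the valency is attained.

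Next I would fix the difference of the orientation classes. Write $N_+$ and $N_-$ for the number of solutions with $J_f>0$ (sense-preserving, $|p'q|>|pq'|$) and $J_f<0$ (sense-reversing, $|p'q|<|pq'|$), so $\mathcal Z_f=N_++N_-$. Because $f(z)\sim a_n\overline{b_m}\,z^{n}\overline z^{\,m}$ as $z\to\infty$ (with $a_n,b_m$ the leading coefficients of $p,q$), the image of a large circle winds $n-m$ times about the fixed point $w$; by the argument principle (equivalently the Brouwer degree), cf. the harmonic version in Duren \emph{et al.}~\cite{duren1996argument}, this winding number equals $\sum_{z\in f^{-1}(w)}\operatorname{sgn}J_f(z)=N_+-N_-$. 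Hence $N_+-N_-=n-m$ and $\mathcal Z_f=(n-m)+2N_-$. Replacing $f$ by $\overline f=q\overline p$ if necessary — this interchanges $(p,n)$ with $(q,m)$ and leaves every fibre unchanged — I may assume $n\ge m$, so the whole theorem reduces to the single estimate $N_-\le m$.

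The estimate $N_-\le m$ is the heart of the matter and is where the techniques of Sheil-Small and Wilmshurst enter. The equation $f(z)=w$ is equivalent to $q(z)=\overline{w/p(z)}$, i.e. to the vanishing of the \emph{harmonic} function $g(z)=q(z)-\overline{w/p(z)}$ (a sum of an analytic and an anti-analytic function, with poles at the zeros of $p$). A direct computation using $|w|=|p||q|$ on the fibre gives $J_g=-J_f/|p|^2$, so the sense-reversing zeros of $f$ are precisely the sense-preserving zeros of $g$. Near such a zero $z_0$ one may write the equation as the fixed-point relation $z=\Lambda(z):=q^{-1}\!\bigl(\overline{w/p(z)}\bigr)$, where $q^{-1}$ is the branch with $\Lambda(z_0)=z_0$; the germ $\Lambda$ is anti-holomorphic and its multiplier has modulus $|\Lambda_{\overline z}(z_0)|=|p'q|/|pq'|<1$, so each counted point is an attracting fixed point of $\Lambda$ (equivalently, of the holomorphic second iterate $\Lambda\circ\Lambda$). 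A Fatou-type principle — attracting cycles are charged against critical points — then bounds these points, and exploiting the special product shape $\overline{w/p}$ of the anti-analytic part (whose critical behaviour comes only from the $n$ zeros of $p$ and the degree-$n$ zero at $\infty$, played off against the degree-$m$ map $q$) is what must force the count down to exactly $m$.

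The delicate step, and the one I expect to resist a short argument, is globalizing this local dynamical picture and extracting the sharp constant $m$ rather than the weaker bound a naive critical-point count would give (a general anti-rational map of degree $n$ only yields $N_-\le n-1$, which is too large once $n>m+1$). Concretely, one must carry out the Fatou estimate either on the genuine anti-holomorphic correspondence defined by $q(z)=\overline{w/p(z)}$, or, following Sheil-Small, replace it by the argument principle applied to $g$ on the sense-preserving region $\{|p'q|<|pq'|\}$, whose boundary is the critical curve $\{|p'q|=|pq'|\}$, and verify that the boundary variation of $\arg g$ contributes at most $m$. Once $N_-\le m$ is established, the theorem is immediate: $\mathcal Z_f=(n-m)+2N_-\le (n-m)+2m=n+m$.
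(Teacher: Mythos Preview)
You are attacking a much harder problem than the one the paper actually proves. In the paper, $\mathcal{Z}_f$ is defined in the introduction as the number of solutions of $f(z)=0$, not the valency $\sup_w\#f^{-1}(w)$; the abstract and title use the word ``valency'' loosely, but the theorem as stated concerns only the zero set. For $w=0$ the equation $p(z)\overline{q(z)}=0$ factors completely: it holds iff $p(z)=0$ or $q(z)=0$, and the Fundamental Theorem of Algebra gives at most $n$ roots for $p$ and at most $m$ for $q$ (the same points as for $\overline q$), hence at most $n+m$ altogether. That is the paper's entire argument; no dynamics, no argument principle, and the hypothesis $q\neq\alpha p$ is not actually used.

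Your proposal, aimed at general $w\neq 0$, sets up the degree identity $N_+-N_-=n-m$ correctly, but then stops at the decisive step: you never establish $N_-\le m$, and you explicitly flag that neither the Fatou attracting-basin count nor the Sheil--Small argument-principle estimate on the critical curve has been carried out. Without that bound the chain $\mathcal Z_f=(n-m)+2N_-\le n+m$ is unsupported, so the proposal is not a proof even of the stronger statement you have in mind. You also note yourself that the generic anti-rational estimate gives only $N_-\le n-1$, which is too weak; nothing in the proposal explains how the special product structure of $p\overline q$ forces the improvement to $m$.
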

\begin{proof}
Since $p(z)$ is an analytic polynomial of degree $n$ and $q(z)$ is an analytic polynomial of degree $m,$ we have a power series representations:
$$
p(z)= \sum _{k=0}^n a_kz^k ~~~ and ~~~ q(z)=\sum _{j=0}^m b_jz^j. 
$$
 Then by the Fundamental Theorem of Algebra, we can decompose $p(z)$ and $q(z)$ as 
$$
 p(z)=\beta (z-a_1)(z-a_2)...(z-a_n) ~~~ and ~~~ q(z)= \gamma (z-b_1)(z-b_2)...(z-b_m). 
$$
This directly implies that $p(z)$ has at most $n$ distinct complex roots and $q(z)$ has at most $m$ distinct complex roots. It then follows from 
$$
f(z)=\underbrace{\underbrace{(\beta \overline{ \gamma)} (z-a_1)(z-a_2)...(z-a_n)}_{\leq n~ distinct~zeros} \underbrace{\overline{(z-b_1)(z-b_2)...(z-b_m)}}_{\leq m~ distinct~zeros}.}_{\leq n+m~distinct~zeros.}
$$
It is not surprising that $q(z)$ and $\overline{q(z)}$ have the same number of zeros. Therefore, $\mathcal{Z}_f \leq m+n.$ 
\end{proof}
\section{\textcolor{blue}{SHARPNESS OF THE BOUND}}
\begin{example}
$f(z)=|z|^2z+\overline{z}$ is a complex-valued function and can be rewritten as $f(z)=(z^2+1)(\overline{z}).$ In our case, $p(z)=z^2+1$ and $q(z)=z.$ We have here, $z=i, z=-i$ and $z=0$ are the three distinct zeros of $f.$ Therefore, the valency $\mathcal{Z}_f \leq m+n$ is sharp.
\end{example}
 \section{\textcolor{blue}{CONCLUSION}}$\label{V}$
In this paper we have determined the maximum number of the  zeros of harmonic polynomial of the form $f(z) = p(z)\overline{q(z)}$, where $p(z)$ is an analytic polynomial of degree $n$ and $q(z)$ is an analytic polynomial of degree $m,$ and  $q(z) \neq \alpha p(z)$ for some constant $\alpha.$ The result shows that the valence of  $f(z)$ is $m+n.$
\section*{\textcolor{blue}{ACKNOWLEDGMENTS}}
I would like to thank Addis Ababa University, Simons Foundation, and ISP(International Science Program) at department level for providing us opportunities and financial support.
\section*{\textcolor{blue}{DECLARATION OF INTEREST}}
The author declare that there are no conflicts of interest regarding the publication of this paper.

\end{document}